\newtheorem{theorem}{Theorem}
\newtheorem{definition}{Definition}
\newtheorem{lemma}{Lemma}
\newtheorem{remark}{Remark}
\newtheorem{corollary}{Corollary}
\begin{document}

\title{A Nice Representation for a Link between Baskakov- and Sz\'{a}sz-Mirakjan-Durrmeyer Operators and their Kantorovich Variants}  

\author{Margareta Heilmann and  Ioan Ra\c{s}a }        

\maketitle
\begin{center}
Dedicated to Professor Heiner Gonska on the occasion of his 70th birthday
\end{center}
\begin{abstract}
In this paper we consider a link between Baskakov-Durrmeyer type  operators and corresponding Kantorovich type modifications of their classical variants.
We prove a useful representation for Kantorovich variants of arbitrary order which leads to a simple proof of convexity properties for the linking operators. This also solves an open problem mentioned in 
\cite{BaumannHeilmannRasa2016}. Another open problem is presented at the end of the paper.

{\bf Keywords:}{Linking operators, Baskakov-Durrmeyer type operators,  Kantorovich modifications of operators}
\\
{\bf MSC 2010:} {41A36 \and 41A10 \and 41A28}
\end{abstract}

Dedicated to Professor Heiner Gonska on the occasion of his 70th birthday
\section{Introduction}
\label{sec1}
In \cite{Paltanea2007} P\u {a}lt\u {a}nea defined a non-trivial link between genuine Bernstein-Durrmeyer operators and classical Bernstein operators, depending on a positive real parameter $\rho$. For further results we refer to \cite{Paltanea2008}, \cite{GonskaPaltanea2010}, \cite{GonskaPaltanea2010-1}, \cite{GonskaRasaStanila2014}, \cite{GonskaRasaStanila2014-1} and \cite{HeilmannRasa2017}.

In \cite{HeilmannRasa2017} the authors of this paper considered the $k$th order Kantorovich modifications and proved for natural numbers $\rho$ a representation in terms of Bernstein basis functions (see \cite[Theorem 2]{HeilmannRasa2017}).

In this paper we investigate linking operators acting on the unbounded interval $[0,\infty)$.
Linking operators for the  Sz\'{a}sz-Mirakjan case were defined by P\u {a}lt\u {a}nea in \cite{Paltanea2008} and for Baskakov type operators by Heilmann and Ra\c{s}a in \cite{HeilmannRasa2015}.

In what follows for $c \in \mathbb{R} $ we use the notations
$$
	a^{c,\overline{j}} := \prod_{l=0}^{j-1}  (a+cl) , \; a^{c,\underline{j}} := \prod_{l=0}^{j-1}  (a-cl) , \; j \in \mathbb{N}; \quad
	 a^{c,\overline{0}}= a^{c,\underline{0}} :=1
$$which can be considered as a generalization of rising and falling factorials.  This notation enables us to state the results for the different operators in a unified form.
 
In the following definitions of the operators we omit the parameter $c$ in the notations in order to reduce the necessary sub and superscripts.

Let $c \in \mathbb{R}$, $c \geq 0$, $n \in \mathbb{R}$, $n > c$ . 
Furthermore let  $\rho \in \mathbb{R}^+$, $j \in \mathbb{N}_0$, $x \in [0,\infty)$. Then the basis functions are given by
$$
	p_{n,j}(x) = \left \{ \begin{array}{cl}
	\displaystyle \frac{n^j}{j!} x^j  e^{-nx} &, \, c = 0 ,\\
	\displaystyle \frac{n^{c,\overline{j}}}{j!} x^j (1+cx)^{-\left ( \frac{n}{c}+j\right)} &, \, c > 0 .
 	\end{array} \right .
$$
By $W_n^\rho$ we denote the space of functions $ f \in L_{1,loc}[0,\infty) $ satisfying certain growth conditions, i.~e., there exist constants $M>0$, $ 0 \leq q < n\rho +c$, such that a.~e. on $ [0,\infty)$
\begin{eqnarray*}
	 |f(t)| & \leq & Me^{qt} \mbox{ for } c=0,
	\\
	 |f(t)| & \leq & Mt^{\frac{q}{c}} \mbox{ for } c>0.
\end{eqnarray*}

In the following definition we assume that $f:I_c \longrightarrow \mathbb{R}$ is given in such a way that the corresponding integrals and series are convergent.
\begin{definition}
The operators of Baskakov-type are defined by
$$
	(B_{n,\infty} f)(x) = \sum_{j=0}^{\infty} p_{n,j}(x) f \left ( \frac{j}{n} \right ) ,
$$
and the genuine Baskakov-Durrmeyer type operators are denoted  by 
\begin{eqnarray}
\label{eq0.2}
	 (B_{n,1} f)(x)
	& = & 
	p_{n,0}(x) f(0)  +
	\sum_{j=1}^{\infty} p_{n,j}(x) (n+c) \int_0^{\infty} p_{n+2c,j-1} (t) f (t) dt .
\end{eqnarray}
Depending on a parameter  $\rho \in \mathbb{R}^+$ the linking operators are given by
\begin{equation}
\label{eq0.3}
	 (B_{n,\rho} f)(x) = 
	p_{n,0} (x) f(0) 
	+ \sum_{j=1}^{\infty} p_{n,j}(x) \int_0^\infty \mu_{n,j,\rho}(t) f(t) dt ,
\end{equation}
where
$$
	\mu_{n,j,\rho} (t) = \left \{ \begin{array}{cl}
	\displaystyle \frac{(n\rho)^{j\rho}}{\Gamma (j \rho )} t^{j\rho-1}  e^{-n\rho t} &, \, c = 0  ,\\
	\displaystyle \frac{c^{j\rho}}{B \left (j\rho,\frac{n}{c}\rho+1 \right )} t^{j\rho -1} (1+ct)^{-\left ( \frac{n}{c}+j\right)\rho -1} &, \, c > 0 ,
	\end{array} \right .
$$
with Euler's Beta function
$$
	B(x,y) = \int_0^1 t^{x-1} (1-t)^{y-1} dt = \int_0^{\infty} \frac{t^{x-1}}{(1+t)^{x+y}} dt 
	= \frac{\Gamma (x) \Gamma (y)}{\Gamma (x+y)} , \, x,y >0.
$$
\end{definition}
For $ \rho \in \mathbb{R}^+$ the operators $B_{n,\rho}$ are well defined for functions $f \in W_n^\rho$ having a finite limit $f(0) = \lim_{x \to 0^+} f(x)$.

Setting  $c=0$ in (\ref{eq0.2})  leads to the  Phillips operators \cite{Phillips1954}, $c>0$ was investigated in \cite{Wagner2013}. To the best of our knowledge  $c=0$ in (\ref{eq0.3}) was first considered in \cite{Paltanea2008}.

In \cite{HeilmannRasa2015} (see also \cite{Paltanea2014, BaumannHeilmannRasa2016}) the authors considered
 the $k$-th order Kantorovich modifications of the operators $B_{n,\rho}$, namely,
$$
	B_{n,\rho}^{(k)}:=D^k \circ B_{n,\rho}\circ I_k, \, k \in \mathbb{N}_0 ,
$$
where
$D^k$ denotes the $k$-th order ordinary differential operator and
$$
	I_0 f = f, \ (I_k f)(x) = \int_0^x \frac{(x-t)^{k-1}}{(k-1)!}  f(t) dt,
	\mbox{ if } k \in \mathbb{N}.
$$
For $k=0$ we omit the superscript $(k)$ as indicated by the definition above.
These operators play an important role in the investigation of simultaneous approximation.

This general definition contains  many known operators as special cases.
For  $c=0$ we get the $k$-th order Kantorovich modification of linking operators considered in  \cite{Paltanea2014}.
For $\rho=1, k \in\mathbb{N} $ we get the Baskakov-Durrmeyer type operators 
$B_{n,1}^{(1)}$ (see \cite{MazharTotik1985} for $c=0$ and \cite[(1.3)]{Heilmann1989} for $c \geq 0$, named $M_{n+c}$ there) and the auxiliary operators $B_{n,1}^{( k)}$ considered in \cite[(3.5)]{Heilmann1992}, (named $M_{n+c,k-1}$ there).

Concerning the limit of the operators $B_{n,\rho}^{(k)}$ for $\rho \to \infty$ some results are known which are cited here.
\begin{theorem} (\cite[Theorem 4]{Paltanea2014})
\label{thm-c1}
Let $c=0$. Assume that $f: [0,\infty) \longrightarrow \mathbb{R}$ is integrable and there exist constants $M > 0 $, $q \geq 0$ such that $|f(t) | \leq M e^{qt}$ for $t \in [0,\infty)$. Then for any $b>0$ there is $\rho_0 >0$, such that $B_{n,\rho} f $ exists for all $\rho \geq \rho_0$ and we have
$$ \lim_{\rho \to \infty} (B_{n,\rho} f)(x)  = (B_{n,\infty} f)(x), \mbox{ uniformly for } x \in [0,b].$$
\end{theorem}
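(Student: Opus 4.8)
The plan is to exploit the fact that, for $c=0$, the kernel $\mu_{n,j,\rho}$ is nothing but the density of a Gamma distribution. Indeed $\mu_{n,j,\rho}(t)=\frac{(n\rho)^{j\rho}}{\Gamma(j\rho)}t^{j\rho-1}e^{-n\rho t}$ integrates to $1$ over $[0,\infty)$, has mean $\frac{j\rho}{n\rho}=\frac{j}{n}$ and variance $\frac{j\rho}{(n\rho)^2}=\frac{j}{n^2\rho}$. Thus, as $\rho\to\infty$, the mean stays fixed at $j/n$ while the variance tends to $0$, so for each fixed $j$ the family $(\mu_{n,j,\rho})_\rho$ is an approximate identity concentrating at the node $j/n$ of the operator $B_{n,\infty}$. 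This already suggests $\int_0^\infty\mu_{n,j,\rho}(t)f(t)\,dt\to f(j/n)$, and comparing the series $(B_{n,\rho}f)(x)=p_{n,0}(x)f(0)+\sum_{j\geq 1}p_{n,j}(x)\int_0^\infty\mu_{n,j,\rho}f$ with $(B_{n,\infty}f)(x)=\sum_{j\geq 0}p_{n,j}(x)f(j/n)$ reduces everything to controlling, uniformly in $x\in[0,b]$, the difference $\sum_{j\geq 1}p_{n,j}(x)\big[\int_0^\infty\mu_{n,j,\rho}f-f(j/n)\big]$.

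First I would record the basic moment-generating estimate. Since $|f(t)|\leq Me^{qt}$ with $0\leq q<n\rho$, a direct computation gives $\int_0^\infty\mu_{n,j,\rho}(t)e^{qt}\,dt=\big(1-\frac{q}{n\rho}\big)^{-j\rho}$, which decreases to $e^{qj/n}$ as $\rho\to\infty$. Fixing any $\rho_0>2q/n$ and setting $C_0:=\big(1-\frac{q}{n\rho_0}\big)^{-n\rho_0}\geq e^q$, this yields the uniform bound $\big|\int_0^\infty\mu_{n,j,\rho}f\big|\leq M\,C_0^{j/n}$ for all $\rho\geq\rho_0$, and likewise $|f(j/n)|\leq Me^{qj/n}\leq MC_0^{j/n}$. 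This simultaneously shows that $B_{n,\rho}f$ exists for $\rho\geq\rho_0$ and supplies a summable majorant, since for $x\in[0,b]$ one has $p_{n,j}(x)C_0^{j/n}\leq(nb\,C_0^{1/n})^j/j!$, whose sum over $j$ is finite and independent of $x$.

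Next I would prove the pointwise convergence $\int_0^\infty\mu_{n,j,\rho}(t)f(t)\,dt\to f(j/n)$ at each fixed $j$ (at a continuity point of $f$). Writing the difference as $\int_0^\infty\mu_{n,j,\rho}(t)[f(t)-f(j/n)]\,dt$ and splitting at $|t-j/n|<\delta$ versus $|t-j/n|\geq\delta$, the near part is $<\varepsilon$ by continuity together with total mass $1$, while on the far part I would use Cauchy--Schwarz, $\int_{|t-j/n|\geq\delta}\mu_{n,j,\rho}|f(t)-f(j/n)|\,dt\leq\big(\int\mu_{n,j,\rho}(|f|+|f(j/n)|)^2\big)^{1/2}\big(\int_{|t-j/n|\geq\delta}\mu_{n,j,\rho}\big)^{1/2}$; the first factor is bounded via the moment-generating estimate evaluated at $2q$ (hence the choice $\rho_0>2q/n$), and the second factor tends to $0$ by Chebyshev's inequality together with the vanishing variance $\frac{j}{n^2\rho}$.

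Finally I would assemble the uniform estimate by a standard $\varepsilon$-split of the series. Given $\varepsilon>0$, the majorant from the second paragraph lets me choose $N$ so that $\sum_{j>N}p_{n,j}(x)\,2M\,C_0^{j/n}<\varepsilon/2$ for all $x\in[0,b]$ and all $\rho\geq\rho_0$; then, with $N$ fixed, the remaining finite sum $\sum_{j=1}^{N}p_{n,j}(x)\big|\int\mu_{n,j,\rho}f-f(j/n)\big|$ tends to $0$ as $\rho\to\infty$ uniformly in $x\in[0,b]$, because each $p_{n,j}$ is bounded on $[0,b]$ and every bracket tends to $0$ by the third paragraph. I expect the main obstacle to be precisely this interchange of $\lim_{\rho\to\infty}$ with the infinite sum in the presence of the exponential growth of $f$; the moment-generating bound $\big(1-\frac{q}{n\rho}\big)^{-j\rho}$ is the device that both dominates the series and controls the off-mean mass, so everything hinges on having it available uniformly for $\rho\geq\rho_0$.
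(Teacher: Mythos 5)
The paper offers no proof of this statement: it is imported verbatim from P\u{a}lt\u{a}nea \cite[Theorem 4]{Paltanea2014} and used as a known result, so there is no in-paper argument to compare yours against. Judged on its own, your proof is correct and complete. The identification of $\mu_{n,j,\rho}$ as the Gamma density with mean $j/n$ and variance $j/(n^2\rho)$, the moment-generating bound $\int_0^\infty\mu_{n,j,\rho}(t)e^{qt}\,dt=(1-\frac{q}{n\rho})^{-j\rho}$ (decreasing in $\rho$, hence the $\rho$-uniform summable majorant $MC_0^{j/n}$ that settles both the existence of $B_{n,\rho}f$ for $\rho\ge\rho_0>2q/n$ and the tail of the series), the continuity/Chebyshev split for the pointwise convergence of each coefficient, and the final finite-section argument all check out. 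The only point worth flagging is that your step $\int_0^\infty\mu_{n,j,\rho}(t)f(t)\,dt\to f(j/n)$ requires continuity of $f$ at the nodes $j/n$, which the statement as transcribed does not explicitly assume (it says only ``integrable'' plus the growth bound). Since $B_{n,\infty}f$ itself evaluates $f$ at these nodes, some such hypothesis is implicit in the theorem, so your parenthetical restriction to continuity points is the right reading rather than a defect; you should simply state that assumption up front.
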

In \cite{HeilmannRasa2015} explicit representations for the images of polynomials for all operators $B_{n,\rho}^{(k)}$ were calculated which led to the following result for $c \geq 0$.
\begin{theorem} (\cite[Theorem 1, Theorem 2, Corollary 1]{HeilmannRasa2015})
\label{thm-c2}
For each polynomial $p$ we have
$$ \lim_{\rho \to \infty} (B_{n,\rho}^{(k)} p)(x)  = (B_{n,\infty}^{(k)} p)(x),$$
uniformly on every compact subinterval of   $[0,\infty)$.
\end{theorem}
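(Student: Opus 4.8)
The plan is to use linearity to reduce to monomials, convert the claim into convergence of polynomial coefficients, and then dispose of the differential operator $D^{k}$ for free. Write $e_{m}(t)=t^{m}$. Since $B_{n,\rho}^{(k)}=D^{k}\circ B_{n,\rho}\circ I_{k}$ and $B_{n,\infty}^{(k)}=D^{k}\circ B_{n,\infty}\circ I_{k}$ are linear and every polynomial is a finite linear combination of the $e_{m}$, it suffices to treat $p=e_{m}$. A direct Beta-integral computation gives $I_{k}e_{m}=\frac{m!}{(m+k)!}\,e_{m+k}$, so the assertion reduces to showing, for every $m\in\mathbb{N}_{0}$,
$$ \lim_{\rho\to\infty} D^{k}\big(B_{n,\rho}e_{m+k}\big)(x)=D^{k}\big(B_{n,\infty}e_{m+k}\big)(x),$$
uniformly on compact subintervals of $[0,\infty)$.

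First I would record the structural fact, which also follows from the explicit representations in \cite{HeilmannRasa2015}, that $B_{n,\rho}e_{M}$ and $B_{n,\infty}e_{M}$ are polynomials in $x$ of degree $M$. For the discrete operator this rests on the factorial-power identity
$$\sum_{j\ge 0} p_{n,j}(x)\,j(j-1)\cdots(j-r+1)=n^{c,\overline{r}}\,x^{r},$$
which is valid for both the Poisson weights $(c=0)$ and the negative-binomial weights $(c>0)$; expanding an arbitrary polynomial in $j$ in falling factorials then shows that $\sum_{j}p_{n,j}(x)\,q(j)$ is a polynomial in $x$ whenever $q$ is a polynomial. For $B_{n,\rho}$ the same conclusion follows once the moments $\int_{0}^{\infty}\mu_{n,j,\rho}(t)\,t^{M}\,dt$ are seen to be polynomials in $j$ of degree $M$.

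The computational core is therefore the evaluation of these moments. For $c=0$ the Gamma integral yields
$$\int_{0}^{\infty}\mu_{n,j,\rho}(t)\,t^{M}\,dt=\frac{1}{n^{M}}\prod_{i=0}^{M-1}\Big(j+\frac{i}{\rho}\Big),$$
while for $c>0$ the Beta integral gives
$$\int_{0}^{\infty}\mu_{n,j,\rho}(t)\,t^{M}\,dt=\frac{1}{c^{M}}\,\frac{\prod_{i=0}^{M-1}\big(j+\frac{i}{\rho}\big)}{\prod_{i=0}^{M-1}\big(\frac{n}{c}-\frac{i}{\rho}\big)}.$$
In both cases the moment is a polynomial in $j$ whose coefficients depend continuously on $1/\rho$ and tend, as $\rho\to\infty$, to the coefficients of $(j/n)^{M}$, which is precisely the order-$M$ weight of $B_{n,\infty}$. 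Expanding each moment in falling factorials and invoking the identity above, the coefficients of the polynomial $B_{n,\rho}e_{M}$ converge to those of $B_{n,\infty}e_{M}$ as $\rho\to\infty$.

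Finally, all the polynomials $B_{n,\rho}e_{M}$ together with their limit lie in the finite-dimensional space of polynomials of degree at most $M$, on which coefficientwise convergence coincides with convergence in any $C^{k}$-norm over a fixed compact interval; in particular it forces $D^{k}B_{n,\rho}e_{M}\to D^{k}B_{n,\infty}e_{M}$ uniformly on compacta, which is the desired statement. The only delicate point I expect is the moment evaluation together with its uniform handling of the cases $c=0$ and $c>0$ (and the range of $\rho$ for which the $c>0$ Beta integral converges); once the moments are in hand the passage through $D^{k}$ is automatic, since differentiation is continuous on a fixed finite-dimensional polynomial space.
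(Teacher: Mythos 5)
Your argument is correct, and it is essentially the route the paper itself indicates: Theorem \ref{thm-c2} is quoted from \cite{HeilmannRasa2015}, where it is obtained precisely by computing explicit representations of the images of (monomial) polynomials under $B_{n,\rho}^{(k)}$ and letting $\rho\to\infty$ coefficientwise. Your moment evaluations and the reduction through $I_k$, the falling-factorial identity, and the finite-dimensionality argument for absorbing $D^k$ reproduce that computation faithfully, including the correct caveat about taking $\rho$ large enough for the Beta integral (i.e.\ for $e_{m+k}\in W_n^\rho$) when $c>0$.
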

A different function space was considered in \cite{BaumannHeilmannRasa2016} for the case $c \geq 0$.
\begin{theorem} (\cite[Lemma 5, Corollary 3]{BaumannHeilmannRasa2016})
\label{thm-c3}
Let $ f \in C^2[0,\infty)$ with $\|f''\|_{\infty}^{[0,\infty )} < \infty$. Then we have
$$ \lim_{\rho \to \infty} (B_{n,\rho} f)(x)  = (B_{n,\infty} f)(x), $$
uniformly on every compact subinterval of $[0,\infty)$.
\end{theorem}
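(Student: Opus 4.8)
The plan is to exploit the fact that, for each fixed $j \ge 1$, the function $\mu_{n,j,\rho}$ is a probability density on $[0,\infty)$ whose mass concentrates at the node $j/n$ as $\rho \to \infty$. Writing out the definitions of $B_{n,\rho}$ and $B_{n,\infty}$ and cancelling the common term $p_{n,0}(x)f(0)$, one obtains
$$
	(B_{n,\rho}f)(x) - (B_{n,\infty}f)(x) = \sum_{j=1}^{\infty} p_{n,j}(x)\left[\int_0^\infty \mu_{n,j,\rho}(t)f(t)\,dt - f\!\left(\tfrac{j}{n}\right)\right],
$$
so that everything reduces to estimating the bracketed quantity uniformly in $j$ and summing it against the nonnegative weights $p_{n,j}(x)$.

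The first step is to record the moments of $\mu_{n,j,\rho}$. For $c>0$ the substitution $u=ct$ together with the integral representation of the Beta function gives
$$
	\int_0^\infty t^m \mu_{n,j,\rho}(t)\,dt = c^{-m}\,\frac{\Gamma(j\rho+m)\,\Gamma(\tfrac{n}{c}\rho+1-m)}{\Gamma(j\rho)\,\Gamma(\tfrac{n}{c}\rho+1)},
$$
valid whenever $n\rho>c(m-1)$; the case $c=0$ is the analogous Gamma-integral computation. Taking $m=0,1,2$ and using $\Gamma(z+1)=z\Gamma(z)$ yields that $\mu_{n,j,\rho}$ has total mass $1$, first moment \emph{exactly} $j/n$, and
$$
	\int_0^\infty \left(t-\tfrac{j}{n}\right)^2 \mu_{n,j,\rho}(t)\,dt = \frac{j(n+jc)}{n^2(n\rho-c)},
$$
which tends to $0$ like $1/\rho$ and reduces to $j/(n^2\rho)$ when $c=0$. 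The vanishing of the first central moment is the crucial structural feature, and it is exactly what forces the natural hypothesis to be of second order.

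With these moments in hand I would expand $f$ by Taylor's formula about $j/n$,
$$
	f(t) = f\!\left(\tfrac{j}{n}\right) + f'\!\left(\tfrac{j}{n}\right)\!\left(t-\tfrac{j}{n}\right) + \tfrac{1}{2}f''(\xi)\left(t-\tfrac{j}{n}\right)^2,
$$
integrate against $\mu_{n,j,\rho}$, and discard the linear term because its integral vanishes. The remainder is controlled by the second central moment, so that
$$
	\left|\int_0^\infty \mu_{n,j,\rho}(t)f(t)\,dt - f\!\left(\tfrac{j}{n}\right)\right| \le \frac{\|f''\|_{\infty}^{[0,\infty)}}{2}\cdot\frac{j(n+jc)}{n^2(n\rho-c)}.
$$
Summing against $p_{n,j}(x)$ and using $\sum_{j\ge 0} j\,p_{n,j}(x)=nx$ together with the fact that $\sum_{j\ge 0} j^2 p_{n,j}(x)$ is a polynomial in $x$ (being $n^2$ times the second moment of $B_{n,\infty}$), one bounds the whole difference by a constant multiple of $(1+x+x^2)/(n\rho-c)$, which is $O(1/\rho)$ uniformly for $x\in[0,b]$, giving the claimed uniform convergence on every compact subinterval.

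The main obstacle I anticipate is essentially technical and has two parts. First, carrying out the $c>0$ moment computation cleanly and checking that the second moment is finite for all $\rho$ large enough; this requires $n\rho>c$, hence holds eventually, and the same constraint guarantees that $B_{n,\rho}f$ is well defined, since $\|f''\|_\infty<\infty$ forces at most quadratic growth of $f$ and thus $f\in W_n^\rho$ for large $\rho$. Second, justifying that the term-by-term estimate may be summed and that the interchange of summation with the estimate is legitimate, which follows from the nonnegativity of $p_{n,j}(x)$ and the convergence of the Baskakov second moment. Once the variance estimate and the vanishing of the first central moment are established, the Taylor argument and the summation are routine.
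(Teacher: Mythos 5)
This statement is quoted in the paper as a known result from \cite[Lemma 5, Corollary 3]{BaumannHeilmannRasa2016}; the present paper contains no proof of it, so there is nothing internal to compare your argument against line by line. On its own merits your proposal is correct and is the natural argument for a result of this type. The moment computations check out: the substitution $u=ct$ does give $\int_0^\infty t^m\mu_{n,j,\rho}(t)\,dt=c^{-m}\Gamma(j\rho+m)\Gamma(\tfrac{n}{c}\rho+1-m)/\bigl(\Gamma(j\rho)\Gamma(\tfrac{n}{c}\rho+1)\bigr)$, whence total mass $1$, first moment exactly $j/n$, and second central moment $j(n+jc)/\bigl(n^2(n\rho-c)\bigr)$, finite since $n>c$ and $\rho\ge 1$. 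The Taylor step with Lagrange remainder, the vanishing of the linear term, and the summation against $p_{n,j}(x)$ using $\sum_j j\,p_{n,j}(x)=nx$ and $\sum_j j^2 p_{n,j}(x)=n^2x^2+nx(1+cx)$ are all legitimate (term-by-term estimation is justified by nonnegativity and absolute convergence), and you even obtain a quantitative $O(1/\rho)$ rate, uniformly on $[0,b]$, which is more than the qualitative statement asks for. Two small points worth tightening if you write this up: (i) for $c>0$ the membership $f\in W_n^\rho$ under quadratic growth requires $2c<n\rho+c$, i.e.\ $n\rho>c$, which already holds for all $\rho\ge 1$ because $n>c$, so no ``large $\rho$'' threshold is actually needed (the paper's definition of $W_n^\rho$ for $c>0$ is anyway slightly loosely stated near $t=0$); (ii) you should say explicitly that $f$ is integrable against each $\mu_{n,j,\rho}$ and that $B_{n,\infty}f$ converges, both of which follow from the quadratic growth bound and the finiteness of the second moments.
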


For $\rho = 1 $ and $\rho = \infty$ nice explicit representations are known, i.e.,
\begin{eqnarray*}
	(B_{n,1}^{(k)} f)(x) 
	&=& 
	\frac{n^{c,\overline{k}}}{n^{c,\underline{k-1}}} \sum_{j=0}^{\infty} p_{n+ck,j} (x) \int_{I_c}
	p_{n-c(k-2),j+k-1} (t) f(t) dt ,
\\
	B_{n,\infty}^{(k)} (f;x) 
	&=&
	n^{c,\overline{k}} \sum_{j=0}^{\infty} p_{n+ck,j} (x) \Delta^k_{\frac{1}{n}} 
	I_k \left (f;  \frac{j}{n} \right ),
\end{eqnarray*}
where the forward difference of order $k$ with the step $h$ for a function $g$ is given by 
$\Delta^k_{h} g (x) = \sum_{i=0}^k {k \choose i} (-1)^{k-i} g(x+ih)  $. 
By using Peano's representation theorem for divided differences (see, e.g., \cite[p. 137]{Schumaker2007}) this can also be written as
\begin{equation}
\label{eq0}
	B_{n,\infty}^{(k)} (f;x) 
	=
	\frac{n^{c,\overline{k}}}{n^{k-1}} \sum_{j=0}^{\infty} p_{n+kc,j} (x) 
	\int_0^{\infty} N_{n,k,j}(t) f(t) dt,
\end{equation}
where $N_{n,k,j}$ denotes the B-spline of order $k$ to the equidistant knots
$ \frac{j}{n}, \dots \frac{j+k}{n}$, defined by
\begin{eqnarray*}
	N_{n,1,j}(t) 
	&=& 
	\left \{  \begin{array}{ccl}
	1 & , & \frac{j}{n} \leq t < \frac{j+1}{n}, \\
	0 & , & \mbox{otherwise,} \end{array}\right .
\\
	N_{n,k,j} (t)
	&=& 
	\frac{n}{k-1} \left \{ \left ( t - \frac{j}{n} \right ) N_{n,k-1,j} (t)  + 
	 \left ( \frac{j+k}{n} -t\right )N_{n,k-1,j+1} (t) \right \} .
\end{eqnarray*}
Our goal was to find useful representations also for $\rho \not= 1, \infty$ for the general case $k \in \mathbb{N}$.

Throughout this paper we will use the following well-known formulas for the basis functions.
\begin{eqnarray}
\label{eq1.1}
	p_{n,j}' (x) 
	& = &
	n \left [ p_{n+c,j-1} (x) - p_{n+c,j} (x) \right ],
\\
\label{eq1.2}
	x p_{n+c,j-1}' (x) 
	& = &
	(j-1) p_{n+c,j-1} (x) -j p_{n+c,j} (x) .
\end{eqnarray}
\section{The representation theorem}
\label{sec2}
First we treat the case $k=1$. 
In other words we prove an explicit representation for a non-trivial link between Baskakov-Durrmeyer type and Baskakov-Kantorovich operators.
Let $ f \in W_n^\rho$.
\\
We start with the definition of $B_{n,\rho}^{(1)}= D \circ B_{n,\rho} \circ I$
and consider
\begin{equation}
\label{eq1.4}
	\omega_{n,j,\rho} (t) = \left \{
	\begin{array}{lll}
	\int_t^\infty  \mu_{n,1,\rho} (u)  du &,& j=0,
	\\
	\int_0^t \left ( \mu_{n,j,\rho} (u)  -  \mu_{n,j+1,\rho} (u)  \right ) du &,& 1 \leq j .
	\end{array} \right .
\end{equation}
Thus 
$$
	\omega_{n,j,\rho}' (t) = \left \{
	\begin{array}{lll}
	- \mu_{n,1,\rho} (t)  &,& j=0,
	\\
	 \mu_{n,j,\rho} (t)  -  \mu_{n,j+1,\rho} (t)  &,& 1 \leq j .
	\end{array} \right .
$$
Note that 
\begin{equation}
\label{eq1.3}
	\omega_{n,j,\rho} (0) = 0 , \, j \in \mathbb{N} \mbox{ and }
	\lim_{t \to \infty} \omega_{n,j,\rho} (t) = 0 , \,  j \in \mathbb{N}_0 .
\end{equation}
By applying (\ref{eq1.1}), an appropriate index transform and the definition of 
$\omega_{n,j,\rho} $
we derive
\begin{eqnarray*}
	B_{n,\rho}^{(1)} (f;x)
	& = &
	\sum_{j=1}^{\infty} p_{n+c,j}' (x) \int_0^\infty \mu_{n,j,\rho} (t) I_1(f;t) dt
\\
	& = & 
	n \sum_{j=1}^{\infty} p_{n+c,j-1}(x) \int_0^\infty \mu_{n,j,\rho} (t) I_1(f;t) dt
\\
	& &
	- n \sum_{j=1}^{\infty} p_{n+c,j}(x) \int_0^\infty \mu_{n,j,\rho} (t) I_1(f;t) dt
\\
	& =  &
	n p_{n+c,0} (x) \int_0^\infty  \mu_{n,1,\rho} (t) I_1(f;t) dt 
\\
	& &
	+n \sum_{j=1}^{\infty} p_{n+c,j}(x) 
	\int_0^\infty \left [ \mu_{n,j+1,\rho} (t) -  \mu_{n,j,\rho} (t) \right ] I_1(f;t) dt 
\\
	& =  &
	-n \sum_{j=0}^{\infty} p_{n+c,j}(x) 
	\int_0^\infty  \omega_{n,j,\rho}' (t)  I_1(f;t) dt .
\end{eqnarray*}
Integration by parts and observing (\ref{eq1.3}) leads to
\begin{eqnarray}
\label{eq-r1}
	B_{n,\rho}^{(1)} (f;x)
	& = &
	n \sum_{j=0}^{\infty} p_{n+c,j}(x) 
	\int_0^\infty  \omega_{n,j,\rho} (t) f (t) dt .
\end{eqnarray}
\begin{lemma}
\label{lem-int}
Let $\rho \in \mathbb{N}$. Then
$$
	\omega_{n,j,\rho} (t) = \sum_{i=0}^{\rho-1} p_{n\rho+c,i+j\rho} (t).
$$
\end{lemma}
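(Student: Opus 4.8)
The plan is to prove the identity by verifying that the proposed right-hand side
$$
	S_{n,j,\rho}(t) := \sum_{i=0}^{\rho-1} p_{n\rho+c,\,i+j\rho}(t)
$$
and the function $\omega_{n,j,\rho}$ have the same derivative and agree at one endpoint, whence they coincide. Since the derivative $\omega_{n,j,\rho}'$ has already been recorded above in terms of the densities $\mu_{n,j,\rho}$, the whole matter reduces to differentiating the finite sum $S_{n,j,\rho}$ and matching.

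The key preliminary identity I would establish is
$$
	\mu_{n,j,\rho}(t) = (n\rho+c)\, p_{n\rho+2c,\,j\rho-1}(t),
$$
which for $c=0$ reads $\mu_{n,j,\rho}(t) = n\rho\, p_{n\rho,\,j\rho-1}(t)$; that is, each density is, up to the explicit factor $n\rho+c$, a single basis function with shifted parameter $n\rho+2c$ and index $j\rho-1$. This is a direct comparison of the closed forms: the powers of $t$ and of $(1+ct)$ agree once the parameters are read off, and the multiplicative constant collapses to $n\rho+c$ using the Beta--Gamma relation together with the factorisation of the generalized factorial $(n\rho+2c)^{c,\overline{j\rho-1}}$. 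This is the one place where $\rho\in\mathbb{N}$ is essential: it guarantees that $j\rho$ is a positive integer, so that $\Gamma(j\rho)=(j\rho-1)!$ and the basis function of integer index $j\rho-1$ is meaningful.

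Granting this identity, I would differentiate $S_{n,j,\rho}$ termwise by applying (\ref{eq1.1}) with $n$ replaced by $n\rho+c$ (so that the derivatives carry parameter $n\rho+2c$), obtaining
$$
	S_{n,j,\rho}'(t) = (n\rho+c)\sum_{i=0}^{\rho-1}\left[ p_{n\rho+2c,\,j\rho+i-1}(t) - p_{n\rho+2c,\,j\rho+i}(t)\right],
$$
and the sum telescopes to $(n\rho+c)\left[ p_{n\rho+2c,\,j\rho-1}(t) - p_{n\rho+2c,\,(j+1)\rho-1}(t)\right]$. By the preliminary identity this is precisely $\mu_{n,j,\rho}(t) - \mu_{n,j+1,\rho}(t) = \omega_{n,j,\rho}'(t)$ for $j\ge 1$. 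Since all indices in $S_{n,j,\rho}$ are positive when $j\ge 1$, one has $S_{n,j,\rho}(0)=0=\omega_{n,j,\rho}(0)$ by (\ref{eq1.3}), and equality of the two functions follows from equality of derivatives and of values at $0$.

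For $j=0$ the same termwise differentiation (now reading the index $-1$ term as $0$) gives $S_{n,0,\rho}'(t) = -\mu_{n,1,\rho}(t) = \omega_{n,0,\rho}'(t)$; here I would match at $t=\infty$ instead, using that both $S_{n,0,\rho}(t)$ and $\omega_{n,0,\rho}(t)$ tend to $0$ as $t\to\infty$ (the latter by (\ref{eq1.3})), to conclude $S_{n,0,\rho}=\omega_{n,0,\rho}$. I expect the verification of the constant in the preliminary identity --- the simplification of the Beta-function and generalized-factorial factors down to the clean value $n\rho+c$ --- to be the main technical obstacle; the telescoping and the boundary-value steps are routine once that identity is in hand.
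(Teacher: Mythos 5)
Your proposal is correct, and it takes a genuinely different (and tidier) route than the paper. The paper proves the lemma ``forwards'': for each of the two cases $c=0$ and $c>0$ separately, it computes an explicit antiderivative of $u^{l\rho-1}e^{-n\rho u}$, respectively $u^{l\rho-1}(1+cu)^{-(\frac{n}{c}+l)\rho-1}$, by repeated integration by parts, and then evaluates the definite integrals in the definition (\ref{eq1.4}) of $\omega_{n,j,\rho}$, watching the resulting sums telescope into $\sum_{i=0}^{\rho-1}p_{n\rho+c,i+j\rho}$. You instead verify the candidate sum as an antiderivative: your key identity $\mu_{n,j,\rho}(t)=(n\rho+c)\,p_{n\rho+2c,\,j\rho-1}(t)$ checks out (for $c>0$ the constant reduces via $(n\rho+2c)^{c,\overline{j\rho-1}}=c^{j\rho-1}\Gamma(\frac{n}{c}\rho+j\rho+1)/\Gamma(\frac{n}{c}\rho+2)$ and $c(\frac{n}{c}\rho+1)=n\rho+c$), and then (\ref{eq1.1}) with parameter $n\rho+c$ makes $S_{n,j,\rho}'$ telescope to $\mu_{n,j,\rho}-\mu_{n,j+1,\rho}$, which is exactly $\omega_{n,j,\rho}'$; matching at $t=0$ for $j\ge 1$ and at $t\to\infty$ for $j=0$ via (\ref{eq1.3}) closes the argument. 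What your approach buys is a uniform treatment of $c=0$ and $c>0$ in one stroke and the replacement of two long integration-by-parts computations by a one-line telescoping sum; what the paper's approach buys is that it \emph{discovers} the representation rather than merely verifying it, which is how one would find the formula in the first place. Both are complete proofs; yours could reasonably be preferred for exposition.
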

\begin{proof}
First we treat the case $c=0$.
\\
For $ l \in \mathbb{N}$, integration by parts leads to
\begin{eqnarray*}
	\int u^{l\rho-1} e^{-n\rho u} du
	& = &
	-(l\rho-1)! \sum_{i=0}^{l\rho-1} \frac{1}{(n\rho)^{i+1}} \cdot \frac{1}{(l\rho-1-i)!}
	u^{l\rho-1-i}e^{-n\rho u}
\\
	& = &
	-(l\rho-1)! \sum_{i=0}^{l\rho-1} \frac{1}{(n\rho)^{l\rho -i}} \cdot \frac{1}{i!}
	u^{i}e^{-n\rho u}.
\end{eqnarray*}
Thus for $j=0$
\begin{eqnarray*}
	\omega_{n,0,\rho} (t) 
	& = &
	\frac{(n\rho)^{\rho}}{(\rho-1)!} \int_t^\infty  u^{\rho-1} e^{-n\rho u} du
\\
	& = &
	 -\frac{(n\rho)^{\rho}}{(\rho-1)!} (\rho-1)!  \sum_{i=0}^{\rho-1} \left .
	\frac{1}{(n\rho)^{\rho -i}} 
	\cdot \frac{1}{i!}	u^{i}e^{-n\rho u}\right |_t^\infty
\\
	& = &
	\sum_{i=0}^{\rho-1}
	(n\rho)^{i}
	\cdot \frac{1}{i!}	t^{i}e^{-n\rho t}
\\
	& = &
	 \sum_{i=0}^{\rho-1} p_{n\rho,i} (t).
\end{eqnarray*}

For $ j \in \mathbb{N} $ we get
\begin{eqnarray*}
	\lefteqn{\omega_{n,j,\rho} (t)}
\\ 
	& = &
	\frac{(n\rho)^{j\rho}}{(j\rho-1)!} \int_0^t  u^{j\rho-1} e^{-n\rho u} du
	- \frac{(n\rho)^{(j+1)\rho}}{((j+1)\rho-1)!} \int_0^t  u^{(j+1)\rho-1} e^{-n\rho u} du
\\
	& = &
	- \frac{(n\rho)^{j\rho}}{(j\rho-1)!} \cdot (j\rho-1)! 
	\sum_{i=0}^{j\rho-1} \left .
	\frac{1}{(n\rho)^{j\rho -i}} 
	\cdot \frac{1}{i!}	u^{i}e^{-n\rho u}\right |_0^t
\\
	& &
	+ \frac{(n\rho)^{(j+1)\rho}}{((j+1)\rho-1)!} \cdot ((j+1)\rho-1)! 
	\sum_{i=0}^{(j+1)\rho-1} \left .
	\frac{1}{(n\rho)^{(j+1)\rho -i}} 
	\cdot \frac{1}{i!}	u^{i}e^{-n\rho u}\right |_0^t
\\
	& = &
	\sum_{i=j\rho}^{(j+1)\rho-1} (n\rho )^i  \frac{1}{i!}	t^{i}e^{-n\rho t}
\\
	& = &
	\sum_{i=0}^{\rho -1} p_{n\rho,i+j\rho} (t).
\end{eqnarray*}
Now we consider $ c >0$.
\\
For $ l \in \mathbb{N}$, integration by parts leads to
\begin{eqnarray*}
	\lefteqn{\int u^{l\rho-1} (1+cu)^{-(\frac{n}{c}+l)\rho -1} du}
\\
	& = &
	-\frac{(l\rho-1)!}{\Gamma((\frac{n}{c}+l)\rho +1)} \sum_{i=0}^{l\rho-1}  
	\frac{\Gamma((\frac{n}{c}+l)\rho -i)}{(l\rho-1-i)! c^{i+1}}
	u^{l\rho-1-i}  (1+cu)^{-(\frac{n}{c}+l)\rho +i} 
\\
	& = &
	-\frac{(l\rho-1)!}{\Gamma((\frac{n}{c}+l)\rho +1)}  \sum_{i=0}^{l\rho-1}  
	\frac{\Gamma(\frac{n}{c}\rho +1+i)}{i!c^{l\rho -i}} 
	u^{i}  (1+cu)^{-(\frac{n}{c}\rho +1 +i)}. 
\end{eqnarray*}
Thus for $j=0$
\begin{eqnarray*}
	\lefteqn{\omega_{n,0,\rho} (t)}
\\ 
	& = &
	\frac{c^\rho}{B(\rho,\frac{n}{c}\rho+1)} \int_t^\infty u^{\rho -1}
	 (1+cu)^{-(\frac{n}{c}+1)\rho -1} du
\\
	& = &
	- \frac{c^\rho}{B(\rho,\frac{n}{c}\rho+1)} 
	\frac{(\rho-1)!}{\Gamma((\frac{n}{c}+1)\rho +1)}  \sum_{i=0}^{\rho-1}  \left .
	\frac{\Gamma(\frac{n}{c}\rho +1+i)}{i!c^{\rho -i}} 
	u^{i}  (1+cu)^{-(\frac{n}{c}\rho +1 +i)} \right |_t^\infty
\\
	& = &
	 \sum_{i=0}^{\rho-1}  
	\frac{(n\rho +c)^{c, \overline{i}}}{i!}
	t^{i}  (1+ct)^{-(\frac{n}{c}\rho +1 +i)} 
\\
	& = &
	\sum_{i=0}^{\rho-1} p_{n\rho+c,i} (t).
\end{eqnarray*}
For $ j \in \mathbb{N} $ we get
\begin{eqnarray*}
	\lefteqn{\omega_{n,j,\rho} (t) }
\\
	& = &
	\frac{c^{j\rho}}{B(j\rho,\frac{n}{c}\rho+1)} \int_0^t u^{j\rho -1}
	 (1+cu)^{-(\frac{n}{c}+j)\rho -1} du
\\
	& &
	- \frac{c^{(j+1)\rho}}{B((j+1)\rho,\frac{n}{c}\rho+1)} \int_0^t u^{(j+1)\rho -1}
	 (1+cu)^{-(\frac{n}{c}+j+1)\rho -1} du
\\
	& = &
	- \frac{c^{j\rho}}{B(j\rho,\frac{n}{c}\rho+1)} 
	\frac{(j\rho -1)!}{\Gamma((\frac{n}{c}+j)\rho +1)}
	\sum_{i=0}^{j\rho-1} \frac{\Gamma(\frac{n}{c}\rho +1+i)}{i!c^{j\rho -i}} \left .
	u^i  (1+cu)^{-(\frac{n}{c}\rho +1+i} \right |_0^t
\\
	& &
	+\frac{c^{(j+1)\rho}}{B((j+1)\rho,\frac{n}{c}\rho+1)}
	\frac{((j+1)\rho -1)!}{\Gamma((\frac{n}{c}+j+1)\rho +1)}
\\
	& & \times
	\sum_{i=0}^{(j+1)\rho-1} \frac{\Gamma(\frac{n}{c}\rho +1+i)}{i!c^{(j+1)\rho -i}} \left .
	u^i  (1+cu)^{-(\frac{n}{c}\rho +1+i} \right |_0^t
\\
	& = &
	 -\sum_{i=0}^{j\rho-1}  
	\frac{(n\rho +c)^{c, \overline{i}}}{i!}
	t^{i}  (1+ct)^{-(\frac{n}{c}\rho +1 +i)} 
	+ \sum_{i=0}^{(j+1)\rho-1}  
	\frac{(n\rho +c)^{c, \overline{i}}}{i!}
	t^{i}  (1+ct)^{-(\frac{n}{c}\rho +1 +i)} 
\\
	& = &
	\sum_{i=0}^{\rho-1} p_{n\rho+c,i+j\rho} (t).
\end{eqnarray*}
\hfill \qed
\end{proof}
With Lemma \ref{lem-int} and (\ref{eq-r1}) we have now the desired representation of $B_{n,\rho}^{(1)} (f;x)$ in terms of the basis functions, i.~e.,
\begin{eqnarray}
\label{rep1}
	B_{n,\rho}^{(1)} (f;x)
	& = & 
	n
	\sum_{j=0}^{\infty} p_{n+c,j} (x) \int_0^\infty  \left \{ \sum_{i=0}^{\rho-1} p_{n\rho+c,i+j\rho } (t)
	\right \}   f(t) dt .
\end{eqnarray}
So, for the kernel function 
$$
	K_{n,j,\rho} (t) = \sum_{i=0}^{\rho-1} p_{n\rho+c,i+j\rho } (t)
$$
we have $ K_{n,j,1} (t)=p_{n+c,j} (t) $ and we conjecture (see also Section \ref{sec7}) that
$$
	\lim_{\rho \to \infty} K_{n,j,\rho}(t) = \left \{
	\begin{array}{cll}
	1 & , & t \in \left [ \frac{j}{n} , \frac{j+1}{n} \right ),
	\\
	0 & , & t \in [0,\infty) \backslash \left [ \frac{j}{n} , \frac{j+1}{n} 	\right ).
\end{array} \right .
$$

\begin{remark}
Observe that the kernel $K_{n,j,\rho} $  can also be written in terms of classical Baskakov type operators applied to the characteristic functions $\chi_{\left [\frac{j\rho}{n\rho+c},\frac{(j+1)\rho}{n\rho+c} \right )} $, i.~e.,
\begin{eqnarray*}
	B_{n\rho+c,\infty} (\chi_{\left [ \frac{j\rho}{n\rho+c},\frac{(j+1)\rho}{n\rho+c} \right )} ;t) 
	& = &
	\sum_{i=0}^{\infty} p_{n\rho+c,i} (t) \chi_{\left [\frac{j\rho}{n\rho+c},\frac{(j+1)\rho}{n\rho+c} \right )}  \left( \frac{i}{n\rho+c} \right )
\\
	& = &
	\sum_{i=j\rho}^{(j+1)\rho -1} p_{n\rho+c,i} (t) 
\\
	& = &
	\sum_{i=0}^{\rho -1} p_{n\rho+c,i+j\rho } (t) .
\end{eqnarray*}
\end{remark}

\section{Representation for the $k$-th order Kantorovich modification}
\label{sec5}
In this section we generalize the representation of the operators to $k \in \mathbb{N}$.

\begin{theorem}
\label{thm2}
Let $n,k \in \mathbb{N}$, $n-k \geq 1$, $\rho \in \mathbb{N}$ and $f \in W_n^\rho$. Then we have the representation
\begin{eqnarray*}
	{B_{n,\rho}^{(k)} (f;x)}
	& = &
	\frac{n^{c,\overline{k}}}{(n\rho)^{c,\underline{k-1}}} \sum_{j=0}^{\infty} p_{n+kc,j} (x)
\\
	& &
	\times 
	\int_0^\infty
	\sum_{i_1=0}^{\rho-1} \dots \sum_{i_k=0}^{\rho-1}
	p_{n\rho-c(k-2),j\rho +i_1 + \dots +i_k+k-1} (t)   f(t) dt .
\end{eqnarray*}
\end{theorem}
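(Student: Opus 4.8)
The plan is to argue by induction on $k$, using the operator identity
\[
  B_{n,\rho}^{(k)} = D\circ B_{n,\rho}^{(k-1)}\circ I_1 ,
\]
which follows from $D^k=D\circ D^{k-1}$, $I_k=I_{k-1}\circ I_1$ and the definition $B_{n,\rho}^{(k)}=D^k\circ B_{n,\rho}\circ I_k$. The base case $k=1$ is precisely the representation (\ref{rep1}) already obtained, and it matches the claimed formula since $n^{c,\overline 1}/(n\rho)^{c,\underline 0}=n$ and the single-index sum gives $\sum_{i_1=0}^{\rho-1}p_{n\rho+c,j\rho+i_1}$. For the step I assume the stated representation for $B_{n,\rho}^{(k-1)}$ with basis functions $p_{n+(k-1)c,j}$, constant $n^{c,\overline{k-1}}/(n\rho)^{c,\underline{k-2}}$ and kernel $\sum_{i_1,\dots,i_{k-1}}p_{n\rho-c(k-3),\,j\rho+i_1+\dots+i_{k-1}+k-2}$, and then apply $D\circ(\,\cdot\,)\circ I_1$. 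This mirrors the computation (\ref{eq-r1})--(\ref{rep1}) carried out for $k=1$: one integration by parts in $t$ followed by one differentiation in $x$.

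The first move is to replace $f$ by $I_1 f$ and integrate by parts in $t$ to transfer the derivative back onto the kernel. Since $(I_1f)(0)=0$, the boundary term at $0$ vanishes, so I need the tail antiderivative of the kernel. The required ingredient is the identity
\[
  \int_t^\infty p_{m+c,L}(u)\,du=\frac{1}{m}\sum_{l=0}^{L}p_{m,l}(t),
\]
which I obtain by integrating (\ref{eq1.1}) from $t$ to $\infty$ (using $p_{m,l}(\infty)=0$) and telescoping in $l$; this is the natural analogue of Lemma \ref{lem-int}. Applied term by term to the $(k-1)$-kernel, with $m=n\rho-c(k-2)$, it turns each summand into the cumulative sum $\frac{1}{n\rho-c(k-2)}\sum_{l=0}^{j\rho+i_1+\dots+i_{k-1}+k-2}p_{n\rho-c(k-2),l}(t)$.

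Next I differentiate in $x$. By (\ref{eq1.1}), $D_x p_{n+(k-1)c,j}=(n+(k-1)c)\bigl(p_{n+kc,j-1}-p_{n+kc,j}\bigr)$, and an index shift rewrites the series as $\sum_{j\ge 0}p_{n+kc,j}(x)$ acting on the forward difference of the integral coefficients. The crucial simplification is that the difference of the two cumulative sums (indices $j+1$ and $j$) telescopes to a block of exactly $\rho$ consecutive basis functions, namely $\sum_{i_k=0}^{\rho-1}p_{n\rho-c(k-2),\,j\rho+i_1+\dots+i_k+k-1}(t)$; this is what produces the new summation index $i_k$ and upgrades the kernel to its order-$k$ form. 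Finally I collect the constant: the factor $n+(k-1)c$ from differentiation combines with $n^{c,\overline{k-1}}$ to give $n^{c,\overline{k}}$, while the factor $1/(n\rho-c(k-2))$ from the tail integral combines with $(n\rho)^{c,\underline{k-2}}$ to give $(n\rho)^{c,\underline{k-1}}$, yielding exactly $n^{c,\overline{k}}/(n\rho)^{c,\underline{k-1}}$ and completing the induction.

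I expect the main obstacle to be the analytic bookkeeping rather than the algebra: justifying the integration by parts (vanishing of the boundary term at $\infty$) and the interchanges of $D_x$, $\sum_j$ and $\int_0^\infty$ all rest on the growth conditions defining $W_n^\rho$ together with the decay of the basis functions, and one must check that the intermediate parameter $n\rho-c(k-2)$ (hence the basis functions $p_{n\rho-c(k-2),\cdot}$ and the factor $1/(n\rho-c(k-2))$) is admissible — this is exactly what the hypothesis $n-k\ge 1$ guarantees. The combinatorial identity for the telescoping cumulative sums is the only genuinely new computation, and once the tail-integral identity above is in place it reduces to matching summation ranges.
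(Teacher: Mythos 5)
Your proposal is correct and follows essentially the same route as the paper: induction on $k$ via $B_{n,\rho}^{(k)}=D\circ B_{n,\rho}^{(k-1)}\circ I_1$ with base case (\ref{rep1}), one integration by parts in $t$, one application of (\ref{eq1.1}) in $x$ with an index shift, and a telescoping that produces the block of $\rho$ consecutive basis functions, with the constants combining exactly as you describe. The only (harmless) difference is the order of the two moves: the paper differentiates in $x$ first and then rewrites the resulting difference of $t$-kernels as a $t$-derivative before integrating by parts, whereas you integrate by parts first via the cumulative-sum antiderivative and let the forward difference in $j$ do the telescoping.
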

\begin{proof}
We prove the theorem by induction.
\\
For $k=1$ see (\ref{rep1}).
\\
$k \Rightarrow k+1$: From the definition of the operators $B_{n,\rho}^{(k+1)}  $ we get
\begin{eqnarray*}
	B_{n,\rho}^{(k+1)} (f;x)
	& = &
	D^1 \circ B_{n,\rho}^{(k)} \circ I_1 (f;x)
\\
	& = &
	\frac{n^{c,\overline{k}}}{(n\rho)^{c,\underline{k-1}}} \sum_{j=0}^{\infty} p_{n+kc,j}' (x)
\\
	& &
	\times 
	\int_0^\infty
	\sum_{i_1=0}^{\rho-1} \dots \sum_{i_k=0}^{\rho-1}
	p_{n\rho-c(k-2),j\rho +i_1 + \dots +i_k+k-1} (t)  I_1( f;t) dt .
\end{eqnarray*}
By using (\ref{eq1.1}) and an appropriate index transform we derive
\begin{eqnarray}
\label{eq5.1}
	B_{n,\rho}^{(k+1)} (f;x)
	& = &
	\frac{n^{c,\overline{k+1}}}{(n\rho)^{c,\underline{k-1}}}
 \sum_{j=0}^{\infty}  p_{n+(k+1)c,j} (x)
\\
\nonumber
	& &
	\times 
	\int_0^\infty
	\sum_{i_1=0}^{\rho-1} \dots \sum_{i_k=0}^{\rho-1}
	\left [ p_{n\rho-(k-2)c,(j+1)\rho +i_1 + \dots +i_k+k-1} (t) \right .
\\
\nonumber
	& &
	\left . \qquad \qquad \qquad \qquad \quad 
	 - p_{n\rho-(k-2)c,j \rho +i_1 + \dots +i_k+k-1} (t)\right ] I_1 (f;t) dt .
\end{eqnarray}
Now we rewrite the difference of the basis functions in the integral and use again (\ref{eq1.1}), i.e.,
\begin{eqnarray*}
\lefteqn{
	p_{n\rho-(k-2)c,(j+1)\rho +i_1 + \dots +i_k+k-1} (t)
	- p_{n\rho-(k-2)c,j \rho +i_1 + \dots +i_k+k-1} (t)}
\\
	& = &
	\sum_{i_{k+1}=0}^{\rho-1}
	\left [ p_{n\rho-(k-2)c,j\rho +i_1 + \dots +i_{k+1}+k} (t) -
	p_{n\rho-(k-2)c,j \rho +i_1 + \dots +i_{k+1}+k-1} (t) \right ]
\\
	& = &
	- \frac{1}{n\rho -(k-1)c}\sum_{i_{k+1}=0}^{\rho-1}
	p_{n\rho-(k-1)c,j\rho +i_1 + \dots +i_{k+1}+k}' (t) .
\end{eqnarray*}
Together with (\ref{eq5.1}) and integration by parts this leads to
\begin{eqnarray*}
	\lefteqn{B_{n,\rho}^{(k+1)} (f;x)}
\\
	& = &
	\frac{n^{c,\overline{k+1}}}{(n\rho)^{c,\underline{k}}}
	\sum_{j=0}^{\infty} p_{n+(k+1)c,j} (x) 
\\
	& &
	\times 
	\int_0^\infty
	\sum_{i_1=0}^{\rho-1} \dots \sum_{i_k=0}^{\rho-1}
	\sum_{i_{k+1}=0}^{\rho-1}
	- p_{n\rho-(k-1)c,j\rho +i_1 + \dots +i_{k+1}+k}' (t)
	 I_1 (f;t) dt
\\
	& = &
	\frac{n^{c,\overline{k+1}}}{(n\rho)^{c,\underline{k}}}
	\sum_{j=0}^{\infty} p_{n+(k+1)c,j} (x) 
\\
	& &
	\times 
	\int_0^\infty
	\sum_{i_1=0}^{\rho-1} \dots 
	\sum_{i_{k+1}=0}^{\rho-1}
	p_{n\rho-(k-1)c,j\rho +i_1 + \dots +i_{k+1}+k} (t) f (t) dt .
\end{eqnarray*}
\hfill \qed
\end{proof}

\section{Convexity of the linking operators}
\label{sec6}
In \cite[Theorem 6]{Paltanea2014} P\u {a}lt\u {a}nea considered convexity properties of the operators $B_{n,\rho}$ in case $c=0$, i.~e., the case of Sz\'{a}sz-Mirakjan linking operators. In a long and tricky proof he showed that
$\left (B_{n,\rho} f \right ) ^{(r)} \geq 0$ for each function $f \in W_n^\rho \cap C^r [0,\infty)$, $f^{(r)} \geq 0$. 

By using P\u {a}lt\u {a}nea's method this  can be generalized to $ B_{n,\rho}^{(k)}$, $ c=0$, $ k \in \mathbb{N}$ 
(see \cite[Theorem 3]{BaumannHeilmannRasa2016}).
From the representation in Theorem \ref{thm2} we know that all the operators $ B_{n,\rho}^{(k)}$, $\rho \in \mathbb{N}$, are positive and the convexity properties for  $ B_{n,\rho}^{(k)}$, $\rho \in \mathbb{N}$ now
follow as a corollary. For $c>0$ this solves an open problem mentioned in  \cite{BaumannHeilmannRasa2016}.
\begin{corollary}
Let $f \in W_n^\rho $ with $f^{(r)} (x) \geq 0$, $ r \in \mathbb{N}_0$, for all $x \in [0,\infty) $. Then
$$
	D^r \left (B_{n,\rho}^{(k)} (f;x) \right ) \geq 0 
$$
for each $ k \in \mathbb{N}$, $x \in [0,\infty)$.
\end{corollary}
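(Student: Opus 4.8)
The plan is to obtain the corollary from two facts: the positivity of the operators $B_{n,\rho}^{(m)}$, which is read off directly from Theorem \ref{thm2}, and an intertwining relation that turns the outer derivative $D^r$ into a derivative on $f$ sitting inside a Kantorovich modification of higher order. I would first record the positivity. For any $g \geq 0$ and any admissible order $m \in \mathbb{N}$, the representation in Theorem \ref{thm2} (read with $k$ replaced by $m$) expresses $B_{n,\rho}^{(m)} g$ as the positive constant $n^{c,\overline{m}}/(n\rho)^{c,\underline{m-1}}$ times a series of the nonnegative basis functions $p_{n+mc,j}(x)$, each multiplied by the integral of the nonnegative kernel $\sum_{i_1,\dots,i_m} p_{n\rho-c(m-2),\,\cdots}(t)$ against $g$. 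Hence $B_{n,\rho}^{(m)} g \geq 0$, which in particular settles the case $r=0$.

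Next I would establish, by induction on $r$, the intertwining identity
$$ D^r B_{n,\rho}^{(k)} f = B_{n,\rho}^{(k+r)} f^{(r)}, \qquad r \in \mathbb{N}_0 .$$
The single inductive step is $D \circ B_{n,\rho}^{(m)} = B_{n,\rho}^{(m+1)} \circ D$ for $m \in \mathbb{N}$. This follows from the identity $B_{n,\rho}^{(m+1)} = D \circ B_{n,\rho}^{(m)} \circ I_1$ already used in the proof of Theorem \ref{thm2}: for a function $g$ one has $I_1 g' = g - g(0)\mathbf{1}$, so $B_{n,\rho}^{(m+1)} g' = D B_{n,\rho}^{(m)} g - g(0)\, D B_{n,\rho}^{(m)}\mathbf{1}$, and the last term vanishes because $B_{n,\rho}^{(m)}\mathbf{1} = D^m B_{n,\rho}(x^m/m!)$ is constant ($B_{n,\rho}$ sending the monomial $x^m$ to a polynomial of degree $\le m$). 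Applying this single step $r$ times, differentiating $f$ once at each stage, yields the displayed identity; for $r=0$ it is trivial.

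Combining the two ingredients closes the argument: since $f^{(r)} \geq 0$ by hypothesis and $B_{n,\rho}^{(k+r)}$ is a positive operator, we get $D^r B_{n,\rho}^{(k)} f = B_{n,\rho}^{(k+r)} f^{(r)} \geq 0$. I expect the main difficulty to be technical rather than conceptual: justifying $D B_{n,\rho}^{(m)}\mathbf{1} = 0$, equivalently that $B_{n,\rho}$ does not raise polynomial degree (which I would take from the explicit polynomial images computed in \cite{HeilmannRasa2015}), and verifying the admissibility and domain conditions ensuring that $B_{n,\rho}^{(k+r)} f^{(r)}$ is well defined, namely $f \in C^r$ with $f^{(r)} \in W_n^\rho$ and Theorem \ref{thm2} valid at order $k+r$ (for $c>0$ this needs $n\rho - c(k+r-2) > 0$). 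Once these are in place the positivity step finishes the proof.
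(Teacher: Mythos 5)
Your proposal is correct and follows essentially the same route as the paper: the paper's proof is precisely the chain $D^r B_{n,\rho}^{(k)} f = D^{k+r} B_{n,\rho} I_{k+r} f^{(r)} = B_{n,\rho}^{(k+r)} f^{(r)} \geq 0$, using the positivity of $B_{n,\rho}^{(k+r)}$ read off from Theorem \ref{thm2}. You merely supply in more detail the justification of the intertwining step (via the degree-preservation of $B_{n,\rho}$ on polynomials) that the paper leaves implicit.
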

\begin{proof}
\begin{eqnarray*}
	D^r B_{n,\rho}^{(k)}  f
	& = & D^r D^k B_{n,\rho} I_k f = D^{k+r}  B_{n,\rho} I_{k+r} f^{(r)}  =
	B_{n,\rho}^{(k+r)} f^{(r)} \geq 0, 
\end{eqnarray*}
as $f^{(r)} \geq 0$.
\hfill \qed
\end{proof}

\section{Remarks on convergence to B-splines}
\label{sec7}
In \cite{HeilmannRasa2017} the authors considered the topic for the linking Bernstein operators with a different approach. In \cite[Theorem 2.3]{GonskaPaltanea2010-1} Gonska and P\u {a}lt\u {a}nea proved the uniform convergence on $[0,1]$ of the linking Bernstein operators to the classical Bernstein operators for every function $ f \in C[0,1]$. With explicit representations for the images of monomials and density arguments this can be extended to the $k$-th order Kantorovich modifications (see \cite[Introduction]{HeilmannRasa2017}). From the representation of the $k$-th order Kantorovich modifications of the linking Bernstein operator and the classical Bernstein operator (see \cite[Theorem 2, (2)]{HeilmannRasa2017}) one can derive immediately that
$$
	\lim_{\rho \to \infty} \frac{1}{\rho^{k-1}} 
	\sum_{i_1=0}^{\rho-1} \dots \sum_{i_k=0}^{\rho-1}
	\tilde{p}_{n\rho+k-2,j\rho +i_1 + \dots +i_k+k-1} (t)  = N_{n,k,j}(t)
$$
for $t \in [0,1]$ with the Bernstein basis functions $\tilde{p}_{n,j}(t) = {n \choose j } t^j (1-t)^{n-j}$.

As the situation concerning the convergence of the operators in the Baskakov-type case is more complicate (see Theorems \ref{thm-c1}, \ref{thm-c2} and \ref{thm-c3}) we can only conjecture that
$$
	\lim_{\rho \to \infty}  \frac{1}{\rho^{k-1}} 
	\sum_{i_1=0}^{\rho-1} \dots \sum_{i_k=0}^{\rho-1}
	p_{n\rho-c(k-2),j\rho +i_1 + \dots +i_k+k-1} (t)  = N_{n,k,j}(t)
$$
for $t \in [0,\infty)$.

We fortify our conjecture by the following illustrations (see Figure \ref{figure1}) where we chose $k=1$, $c=1$, $n=5$ and $j=1$. 

\begin{figure}[ht]
 \centering
 \includegraphics[scale=0.5]{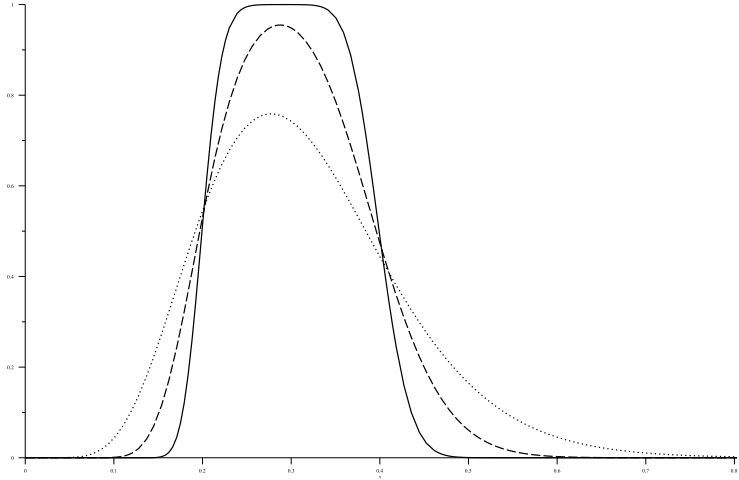}
 \caption{The dotted line belongs to $\rho = 10$, the dashed  line to $\rho = 30$ and the solid line to $\rho = 150$.}
 \label{figure1}
\end{figure}


Margareta Heilmann\\
School of Mathematics and Natural Sciences
\\
University of Wuppertal
\\
Gau{\ss}stra{\ss}e 20
\\
D-42119 Wuppertal, Germany
\\
email: {heilmann@math.uni-wuppertal.de}
\\[5pt]
Ioan Ra\c{s}a
\\
Department of Mathematics
\\
Technical University
\\
Str. Memorandumului 28
\\
RO-400114 Cluj-Napoca,
Romania
\\
email: {Ioan.Rasa@math.utcluj.ro}
\end{document}